\DeclareFontFamily{T1}{cbgreek}{}
\DeclareFontShape{T1}{cbgreek}{m}{n}{<-6>  grmn0500 <6-7> grmn0600 <7-8> grmn0700 <8-9> grmn0800 <9-10> grmn0900 <10-12> grmn1000 <12-17> grmn1200 <17-> grmn1728}{}
\DeclareSymbolFont{quadratics}{T1}{cbgreek}{m}{n}
\DeclareMathSymbol{\qoppa}{\mathord}{quadratics}{19}
\DeclareMathSymbol{\Qoppa}{\mathord}{quadratics}{21}
\newcommand{\B}{\mathrm{B}}
\newcommand{\Sp}{\mathrm{Sp}}
\newcommand{\K}{\mathrm{K}}
\newcommand{\s}{\mathrm{s}}
\newcommand{\gl}{\mathrm{Gl}}
\newcommand{\q}{\mathrm{q}}
\renewcommand{\L}{\mathrm{L}}
\newcommand{\MSTop}{\mathrm{MSTop}}
\newcommand{\MSG}{\mathrm{MSG}}
\newcommand{\BSG}{\mathrm{BSG}}
\newcommand{\ko}{\mathrm{ko}}
\newcommand{\KO}{\mathrm{KO}}
\newcommand{\KU}{\mathrm{KU}}
\newcommand{\id}{\mathrm{id}}
\newcommand{\n}{\mathrm{n}}
\newcommand{\Pic}{\mathrm{Pic}}
\newcommand{\Bsl}{\mathrm{BSl}}
\newcommand{\Bgl}{\mathrm{BGl}}
\newcommand{\Top}{\mathrm{Top}}
\newcommand{\PL}{\mathrm{PL}}
\newcommand{\alg}{\mathrm{alg}}
\renewcommand{\sl}{\mathrm{Sl}}
\newcommand{\bE}{\mathbb{E}}
\newcommand{\Spc}{\mathrm{Spc}}
\newcommand{\An}{\Spc}
\newcommand{\QF}{\Qoppa}
\newcommand{\Z}{\mathbb{Z}}
\renewcommand{\SS}{\mathbb{S}}
\newcommand{\map}{\mathrm{map}}
\newcommand{\Map}{\mathrm{Map}}
\newcommand{\sB}{\mathscr{B}}
\newcommand{\sC}{\mathscr{C}}
\newcommand{\D}{\mathscr{D}}
\newcommand{\ob}{\mathrm{ro}}
\newcommand{\pob}{\overline{\ob}}
\newcommand{\tso}{\mathrm{tso}}
\newcommand{\ptso}{\overline{\tso}}
\newcommand{\lto}{\longrightarrow}
\newcommand{\eps}{\epsilon}
\newcommand{\STop}{\mathrm{STop}}
\newcommand{\adj}{[\tfrac{1}{2}]}
\newcommand{\Fun}{\mathrm{Fun}}
\newcommand{\Th}{\mathrm{Th}}
\newcommand{\vr}{\mathrm{vr}}
\newcommand{\vs}{\mathrm{vs}}
\newcommand{\vq}{\mathrm{vq}}
\newcommand{\vn}{\mathrm{vn}}
\newcommand{\st}{\mathrm{st}}
\newcommand{\Baut}{\mathrm{Baut}}
\newcommand{\SP}{\mathrm{SP}}
\newcommand{\SN}{\mathrm{SN}}
\newcommand{\cS}{\mathscr{S}^\alg}
\newcommand{\pcS}{{\overline{\mathscr{S}}}^\alg}
\newcommand{\BTop}{\mathrm{BTop}}
\renewcommand{\S}{\mathbb{S}}
\renewcommand{\r}{\mathrm{r}}
\renewcommand{\ell}{l}
\DeclareMathOperator*{\colim}{colim}
\DeclareMathOperator*{\cofib}{cofib}
\DeclareMathOperator*{\fib}{fib}
\theoremstyle{plain}
\newcounter{zaehler}
\newtheorem*{introcor*}{Corollary}
\newtheorem{Thm}{Theorem}
\numberwithin{Thm}{section}
\newtheorem*{ThmA}{Main Theorem}
\newtheorem{Cor}[Thm]{Corollary}
\newtheorem{Lemma}[Thm]{Lemma}
\newtheorem{Prop}[Thm]{Proposition}
\theoremstyle{definition}
\newtheorem{Rmk}[Thm]{Remark}
\newtheorem{Preliminaries}[Thm]{Preliminaries}
\newtheorem*{Construction*}{Construction}
\newtheorem*{Acknowledgements}{Acknowledgements}
\theoremstyle{remark}
\newtheorem*{claim*}{Claim}
\title{Homology manifolds and euclidean bundles}
\author[F.~Hebestreit]{Fabian Hebestreit}
\address{Fakult\"at f\"ur Mathematik, Universit\"at Bielefeld, Germany} 
\email{hebestreit@math.uni-bielefeld.de}
\author[M.~Land]{Markus Land}
\address{Institut für Mathematik, JGU Mainz, Germany} 
\email{mland@uni-mainz.de}
\author[M.~Weiss]{Michael Weiss}
\address{Mathematisches Institut, Universit\"at M\"unster, Germany} 
\email{m.weiss@uni-muenster.de}
\author[C.~Winges]{Christoph Winges}
\address{Fakult\"at f\"ur Mathematik, Universit\"at Regensburg, Germany} 
\email{christoph.winges@ur.de}
\date{\today}
\begin{document}

\begin{abstract}
We construct a Poincar\'e complex whose periodic total surgery obstruction vanishes but whose Spivak normal fibration does not admit a reduction to a stable euclidean bundle. This contradicts the conjunction of two claims in the literature: Namely, on the one hand that a Poincar\'e complex with vanishing periodic total surgery obstruction is homotopy equivalent to a homology manifold, which appears in work of Bryant,Ferry, Mio and Weinberger, and on the other that the Spivak normal fibration of a homology manifold always admits a reduction to a stable euclidean bundle, which appears in work of Ferry and Pedersen. 
\end{abstract}

\maketitle
\tableofcontents

\section{Introduction}
In his celebrated work on algebraic surgery theory, Ranicki associates to an oriented Poincar\'e duality complex $X$ an invariant, the total surgery obstruction $\tso(X)$ of $X$, which for $\dim(X)\geq 5$ vanishes if and only if $X$ is realised by a closed, topological manifold \cite{RanickiTSO}. It encodes in a single invariant whether the Spivak normal fibration of $X$ admits a reduction to a stable euclidean bundle, and if so, whether any such reduction gives rise to a degree one normal map with trivial surgery obstruction. Through Siebenmann's famous periodicity mistake and the consequent hunt for the missing manifolds, the question arose whether there exists a similar total surgery obstruction for homology manifolds. This was finally established by Bryant, Ferry, Mio and Weinberger in \cite{BFMW} in terms of (what we shall refer to as) the periodic total surgery obstruction $\ptso(X)$ of $X$, a slightly coarser variant of Ranicki's $\tso(X)$. Based on earlier work of Ferry and Pedersen \cite{FP} that the Spivak normal fibration of a homology manifold always admits a reduction to a stable euclidean bundle, these authors proved that $\ptso(X) = 0$ if and only if $X$ is homotopy equivalent to a homology manifold, and established a surgery exact sequence to calculate the number of such realisations by a homology manifold. \\

The main goal of this note is, however, to show that the results of \cite{BFMW} and \cite{FP} are incompatible with one another by constructing Poincar\'e complexes $X$ with $\ptso(X) = 0$ whose Spivak normal fibration do not admit reductions to euclidean bundles. \\

To explain the construction, let us set up a bit of notation: We will write $\Bgl_1(\SS)$ for the classifying space of the group of units $\gl_1(\SS)$ of the sphere spectrum $\SS$. It can also be described as $\colim_{k \in \mathbb N} \mathrm{BAut}_{*}(S^{k})$ where $\mathrm{Aut}_*(S^k)$ denotes the topological monoid of pointed self-homotopy equivalences of $S^k$ and is hence the classifying space for stable spherical fibrations of virtual rank 0, classically denoted $\mathrm{BG}$. Its homotopy groups satisfy $\pi_n(\Bgl_1(\SS)) \cong \pi_{n-1}(\SS)$ for $n>1$.
The space $\Bgl_1(\SS)$ is the unit component of the Picard space $\mathrm{Pic}(\SS)$ of the sphere spectrum, i.e.\ the space of $\otimes$-invertible spectra, which classifies stable spherical fibrations of arbitrary virtual rank. The unit component of $\gl_1(\SS)$ will be denoted by $\sl_1(\SS)$, so that $\Bsl_1(\SS)$ classifies oriented stable spherical fibrations of virtual rank $0$.

\begin{Construction*}\label{Construction}
Let then $\bar{x} \in \pi_{\ell}(\Bgl_{1}(\SS))$ be an element of positive degree $\ell$ and order $n$. Let $M(\Z/n\Z,\ell)$ be the corresponding mod $n$ Moore space in dimension $\ell$. By assumption, there is then a map $x\colon M(\Z/n\Z,\ell) \to \Bgl_{1}(\SS)$ whose composite with the tautological map $S^{\ell} \to M(\Z/n\Z,\ell)$ is $\bar{x}$. 
By surgeries below the middle dimension one can, for any $m > 2\ell$, construct a stably framed closed smooth $m$-manifold $M$ and a map $f\colon M \to M(\Z/n\Z,\ell)$ inducing an isomorphism on $\pi_\ell$. For instance, one can perform framed surgery on an appropriately embedded $\ell$-sphere in $S^\ell \times S^{m-\ell}$ (mapping to the Moore space via projection to the first factor).
Let $k\geq \ell+1$ be large enough for $-xf\colon M \to \Bgl_{1}(\SS)$ to lift along $\mathrm{BAut}_{*}(S^{k}) \to \Bgl_{1}(\SS)$ and let $\pi \colon E \to M$ be the associated (pointed) spherical fibration over $M$. By choice of $k$, the resulting map $E \to M(\Z/n\Z,\ell)$ is then again an isomorphism on $\pi_\ell$. By \cite{Gottlieb} or \cite[Corollary F]{Klein}, $E$ is a Poincar\'e complex of dimension $d=m+k$, and by \cite{WH} its Spivak normal fibration $\mathrm D_E$ is classified by the composite 
\[E \xrightarrow{\pi} M \xrightarrow{xf} \{-d\} \times \Bgl_{1}(\SS) \subseteq \mathrm{Pic}(\SS);\]
here we view $xf$ as landing in the $-d$ component of $\Pic(\SS)$, as the Spivak fibration of $E$ has virtual rank $-d$.
\end{Construction*}

\begin{ThmA}\hypertarget{ThmB}
For $\bar{x} \in \pi_{l}(\Bgl_{1}(\SS))$ a non-trivial odd torsion element in the image of the $J$-homomorphism $\mathrm{BO} \rightarrow \Bgl_1(\SS)$, the Poincar\'e complexes $E$ constructed above satisfy $\ptso(E) = 0$ for any of the auxilliary choices, but their Spivak fibrations never admit reductions to euclidean bundles.
\end{ThmA}

Examples of elements $\bar{x}$ as required are for example given by any generator $\alpha_p$ of the $p$-torsion subgroup of $\pi_{2p-2}(\Bgl_{1}(\SS)) \cong \pi_{2p-3}(\SS)$; recall that this group is cyclic of order $p$ and that $2p-2$ is the smallest degree in which non-trivial $p$-torsion exists.
The simplest case is that of $\alpha_3$ and we give a separate argument for this specific case, as the argument simplifies considerably while still demonstrating the contradiction between \cite{BFMW} and \cite{FP}. 

In the final section, we will also explain the following addendum to the main result: For any odd torsion element $\bar{x} \in \pi_{l}(\Bgl_{1}(\SS))$ even the fibrewise periodic total surgery obstructions of the spherical fibrations $\pi \colon E \rightarrow M$ constructed above vanishes. This does not have direct implications in the direction of our main result, but should be related to the realisability of the fibration $E \rightarrow M$ by a block bundle of homology manifolds, which can be used as a test case for future developments.

\begin{Acknowledgements}
The authors heartily thank Shmuel Weinberger for helpful discussions. Moreover, we thank the referees for encouraging us to provide more details and the many valuable suggestions for improving the exposition of the paper.

FH, MW and CW were supported by the German Research Foundation (DFG) through the collaborative research centres ``Integral structures in Geometry and Representation theory'' (grant no.\ TRR 358--491392403) at the University of Bielefeld, ``Geometry: Deformations and Rigidity" (grant no.\ SFB 1442--427320536) at the University of M\"unster and ``Higher invariants" (grant no. \ SFB 1085--224262486) at the University of Regensburg, respectively. MW is furthermore a member of the cluster ``Mathematics M\"unster: Dynamics-Geometry-Structure" (grant no.\ EXC 2044--390685587) funded by the DFG at the University of M\"unster.
\end{Acknowledgements}

\section{Proof of the main result}

We start by recalling basic definitions and facts about the visible symmetric signatures and (periodic) total surgery obstructions of oriented Poincar\'e complexes, using the notational conventions from \cite{CDHI,CDHII}.
A reader familiar with these notions may safely skip the following list, in which $X$ usually refers to a general space, whereas $P$ refers to an oriented Poincar\'e complex. 
\begin{Preliminaries}
\begin{enumerate}
\item For a space $X$, we write $\L^\vr(X) = \L( (\D(\Z)_{/X})^\mathrm{f},\QF^\vr_\eps)$ for its \emph{visible quadratic} (the case $\r=\q$) and \emph{visible symmetric} (the case $\r=\s$) L-theory of Weiss and Ranicki; see \cite[Section 9]{Ranicki} where the visible symmetric L-groups are denoted $VL^*(\Z,X)$ in case $X$ is represented by a finite simplicial complex and \cite[Variant 4.4.15]{CDHI} for a general treatment. Here $(\D(\Z)_{/X})^\mathrm{f}$ denotes the full subcategory of the compact objects $(\D(\Z)_{/X})^\omega$ of $\D(\Z)_{/X} = \Fun(X,\D(\Z))$ on finite objects, i.e.\ those whose $\K$-class lies in the image of $\K_0(\Z) \to \K_0(\Z\pi_1(X)) \cong \K_0((\D(\Z)_{/X})^\omega)$. The subscript $\epsilon$ denotes the trivial rank $0$ spherical fibration over $X$ to align with notation from \cite{CDHI,CDHII}. The visible L-spectra are covariantly functorial in $X$; for $f\colon X \to Y$ we shall write $f_!\colon \L^\vr(X) \to \L^\vr(Y)$ for the induced map.

Essentially by definition, we have $\QF^{\mathrm{vq}}_\eps = \QF^\q_\eps$, so that visible quadratic L-theory is simply quadratic L-theory. Moreover, the algebraic $\pi$-$\pi$-theorem says that for $X$ pointed connected, the map $X \to B\pi_1(X)$ induces an isomorphism in visible quadratic L-theory, see \cite[Section 10]{Ranicki} and \cite[Corollary 1.2.33]{CDHIII}. The cofibre of the map $\L^\vq(X) \to \L^\vs(X)$ is denoted $\L^\vn(X)$ and called the \emph{visible normal} L-theory of $X$. In more classical terminology, the choice of working with finite objects corresponds to choosing decoration ``h'' for the L-spectra.
\item We have $\L^\vq(\ast)= \L^\q(\Z)$, $\L^\vs(\ast) = \L^\s(\Z)$, and $\L^\vn(\ast) = \L^\n(\Z)$, the usual quadratic, symmetric, and normal L-theory spectrum of the integers, the latter two of which are ring spectra. In fact, they are $\mathbb{E}_\infty$-ring spectra by the results of \cite{CDHIV}, but this refinement will not matter for the purposes of this paper. From $\L^\n_0(\Z) \cong \Z/8\Z$ \cite[pg.\ 13]{Ranicki} which follows from \cite[Prop.\ 7.2]{Ranicki-I} we see that $\L^\n(\Z)\adj = 0$ so that the map $\L^\q(\Z)\adj \to \L^\s(\Z)\adj$ is an equivalence. Note also that $\L^\q(\Z)$ does not refer to the $\q$-th symmetric L-group as it would in Ranicki's notation. 
\item When $P$ is an oriented Poincar\'e complex of dimension $d$, there is a canonical point $\sigma^\vs(P)$ in $\Omega^d \L^\vs(P)$, called the \emph{visible symmetric signature} of $P$, see \cite[Example 9.13]{Ranicki} and \cite[Corollary 4.4.20]{CDHI}. Its image $\sigma^\vn(P)$ in $\Omega^d \L^\vn(P)$ is called the \emph{visible normal signature} of $P$. These data fit into the following commutative square whose left vertical arrow is the canonical forgetful map between oriented Poincar\'e bordism and oriented normal bordism \cite{RanickiTSO}:
\[ \begin{tikzcd}
	\Omega^{\SP}(X) \ar[r] \ar[d] & \L^\vs(X) \ar[d] \\
	\Omega^{\SN}(X) \ar[r] & \L^\vn(X)
\end{tikzcd}\]
The upper horizontal map takes the (Poincar\'e bordism class of a) map $f\colon P \to X$ to $f_!(\sigma^\vs(P))$. In particular
$\sigma^\vs(P)$ is the image of $[\id_P]$ in $\Omega^\SP_n(P)$, see \cite[Section 19]{Ranicki}. Oriented normal bordism satisfies excision, giving an equivalence $\Omega^\SN(X) \simeq X \otimes \MSG$, and the lower horizontal arrow for $X=\ast$ is called \emph{Ranicki's normal orientation}, while the top horizontal arrow is often referred to as the \emph{Sullivan--Ranicki orientation}. Here, $X \otimes R$ denotes for a spectrum $R$ the $X$-indexed colimit of the constant diagram with value $R$. It is a spectrum whose homotopy groups are the $R$-homology groups of $X$.
\item\label{four} For an oriented closed topological manifold $M$, we may view $[\id_M]$ as an element of $\Omega^{\STop}_n(M)$, the oriented bordism of topological manifolds. By topological transversality \cite[Essay III, Section 1]{KS}, the Pontryagin--Thom map induces an equivalence $\Omega^\STop(X) \xrightarrow{\simeq} X \otimes \MSTop$ for any space $X$, showing that $\sigma^\vs(M)$ canonically lifts along the assembly map $M \otimes \L^\s(\Z) \to \L^\vs(M)$.
\item To better incorporate the orientation behaviour of the above elements, one considers $\L^\n_{1/2}(\Z) = \Z \times_{\Z/8\Z} \tau_{\geq0} \L^\n(\Z)$, the \emph{1/2-connective normal L-theory}. We then obtain canonical maps $\MSG \to \L^\n_{1/2}(\Z)$ as well as $\tau_{\geq 0} \L^\s(\Z) \to \L^\n_{1/2}(\Z)$ lifting the normal orientation $\MSG \to \L^\n(\Z)$ and the canonical projection $\L^\s(\Z) \to \L^\n(\Z)$. 
\item The assembly map in visible normal L-theory is an equivalence: $X \otimes \L^\n(\Z) \xrightarrow{\simeq} \L^\vn(X)$, see \cite[Section 15]{Ranicki}. We then write $\L^\vn_{1/2}(X) = X \otimes \L^\n_{1/2}(\Z)$ and denote by $\L^\vs_{1/2}(X)$ the pullback of $\L^\vs(X) \to \L^\vn(X) \leftarrow \L^\vn_{1/2}(X)$. Consider then the solid part of the diagram
\[\begin{tikzcd}
	\Omega^\SP(X) \ar[r, dashed] \ar[rr, bend left] \ar[d] & \L^\vs_{1/2}(X) \ar[d] \ar[r] & \L^\vs(X) \ar[d] \\
	\Omega^\SN(X) \ar[r] & \L^\vn_{1/2}(X) \ar[r] & \L^\vn(X)
\end{tikzcd}\]
In the lower horizontal factorisation the first map is induced from the lift $\MSG \to \L^\n_{1/2}(\Z)$ of the normal orientation from the previous point. The curved arrow is the Ranicki-Sullivan orientation from point (iii) above. Then we obtain a dashed arrow and think of the left hand square as a refined version of the square from point (iii).
Hence, the visible symmetric signature $\sigma^\vs(P)$ canonically lifts to a point $\sigma^\vs_{1/2}(P)$ of $\Omega^d\L^\vs_{1/2}(P)$. Its image $\sigma^\vn_{1/2}(P)$ in $\Omega^d\L^\vn_{1/2}(P)$ is called the \emph{normal fundamental class of $P$}.
\item For a space $X$, we write $\pcS(X) = \cofib[ X \otimes \L^\s(\Z) \to \L^\vs(X)]$ for the cofibre of the assembly map and $\cS(X) = \cofib[ X \otimes \tau_{\geq 0}\L^\s(\Z) \to \L^\vs_{1/2}(X)]$. There is a canonical map $\cS(X) \to \pcS(X)$ induced by the maps $\tau_{\geq0}\L^\s(\Z) \to \L^\s(\Z)$ and $\L^\vs_{1/2}(X) \to \L^\vs(X)$. 
\item The image of $\sigma^\vs_{1/2}(P)$ under the canonical map $\Omega^d \L^\vs_{1/2}(P) \to \Omega^d\cS(P)$ is denoted $\tso(P)$ and called the \emph{total surgery obstruction} of $P$. By point (iv) above, $\tso(P)$ vanishes in case $P$ is homotopy equivalent to a closed topological manifold, and the main result of \cite{RanickiTSO} says that for $\dim(P) \geq 5$, the converse holds as well.

The image $\ptso(P)$ of $\tso(P)$ under the canonical map $\Omega^d\cS(P) \to \Omega^d\pcS(P)$, the \emph{periodic total surgery obstruction}, is equally the image of $\sigma^\vs(P)$ under the canonical map $\Omega^d \L^\vs(P) \to \Omega^d\pcS(P)$. \cite[Main theorem]{BFMW} says that for $\dim(P) \geq 6$, $\ptso(P)$ is a complete obstruction for $P$ being homotopy equivalent to a closed homology manifold; but recall that the point of this note is to show that in this generality, the argument given in loc.\ cit.\ is not applicable, see also \cite{erratum}.
\item Consider next the diagrams:
\[ \begin{tikzcd}
	X \otimes \tau_{\geq1}\L^\q(\Z) \ar[r] \ar[d] & \L^\vq(X) \ar[d] & & X \otimes \L^\q(\Z) \ar[r] \ar[d] & \L^\vq(X) \ar[d] \\
	X \otimes \tau_{\geq0}\L^\s(\Z) \ar[r] &\L^\vs_{1/2}(X) & & X \otimes \L^\s(\Z) \ar[r] & \L^\vs(X)
\end{tikzcd}\]
The right hand square is a pullback since visible normal L-theory is excisive, and that the left hand square is a pullback then follows directly from the definition of $\L^\vs_{1/2}(X)$. As a consequence we obtain further commutative diagrams
\[\begin{tikzcd}
	\L^\vs_{1/2}(X) \ar[r] \ar[d] & \cS(X) \ar[d,"\partial"] && \L^\vs(X) \ar[r] \ar[d] & \pcS(X) \ar[d,"\partial"] \\
	\L^\vn_{1/2}(X) \ar[r,"-b"] & \Sigma[ X\otimes \tau_{\geq1}\L^\q(\Z)] && \L^\vn(X) \ar[r,"-b"] & \Sigma[X \otimes \L^\q(\Z)]
\end{tikzcd}\]
where $b$ denotes the boundary maps of the vertical and $\partial$ that of the horizontal maps. For a PD complex $P$, we then write $\ob(P) = \partial \tso(P)$ and $\pob(P) = \partial \ptso(P)$ and consequently find
\[ \ob(P) = -b(\sigma^\vn_{1/2}(P)) \quad \text{ and } \quad \pob(P) = -b(\sigma^\vn(P)).\]
By \cite{RanickiTSO}, the \emph{reduction obstruction} $\ob(P)$ is a complete obstruction for the Spivak normal fibration of $P$ to admit a reduction to a stable euclidean bundle. Recall now that by Atiyah-duality, see e.g.\ \cite[Lemma A.6]{Land} for this precise version, the Spanier--Whitehead dual of $P$ is canonically equivalent to $\Th(D_P)$, the Thom spectrum of its Spivak normal fibration $D_P$. Therefore, we obtain commutative diagrams as follows:
\[ \begin{tikzcd}[column sep=tiny]
	\L^\vn_{1/2}(P) \ar[r,"\simeq"] \ar[d,"b"] & \map(\Th(D_P),\L^\n_{1/2}(\Z)) \ar[d,"b"] && \L^\vn(P) \ar[r,"\simeq"] \ar[d,"b"] & \map(\Th(D_P),\L^\n(\Z)) \ar[d,"b"] \\
	\Sigma[P \otimes \tau_{\geq1}\L^\q(\Z)] \ar[r,"\simeq"] & \map(\Th(D_P),\Sigma\tau_{\geq1}\L^\q(\Z)) &&  \Sigma[P \otimes \L^\q(\Z)] \ar[r,"\simeq"] & \map(\Th(D_P),\Sigma\L^\q(\Z))
\end{tikzcd}\]
whose horizontal arrows are equivalences.
By general orientation theory, the normal fundamental class in $\Omega^d \L^\vn_{(1/2)}(P)$ corresponds under these horizontal equivalences to a Thom class in (1/2-connective) normal L-theory for the Spivak fibration $D_P$ of $P$. In particular, we see that $-\ob(P)$ and $-\pob(P)$ are equivalently described as the boundary operator applied to the normal L-theory Thom classes of $D_P$.
\item In his work on the total surgery obstruction \cite{RanickiTSO} Ranicki shows that the space of paths from $\tso(P)$ to $0$ inside $\Omega^{\infty+d} \cS(P)$ is canonically equivalent to the topological manifold block structure space $\widetilde{\mathscr{S}}(P)$. In particular, the topological block structure space $\widetilde{\mathscr{S}}(M)$ of a topological manifold $M$ identifies canonically with $\Omega^{\infty+d+1} \cS(M)$. In a similar fashion, the main results of \cite{BFMW} ought to give an identification of the homology manifold block structure space $\widetilde{\mathscr{S}}^{H}(P)$ with the space of paths from $\ptso(P)$ to $0$ inside $\Omega^{\infty+d} \pcS(P)$; for $P$ a topological manifold, this could indeed follow from careful inspection of the arguments in \cite{BFMW}, 
but for general $P$, our results below again imply that this last claim suffers from the defect indicated in point (viii).
\end{enumerate}
\end{Preliminaries}

We are now ready to prove our main result. As indicated, we first consider the special case in which $\bar{x}$ is a generator of the 3-torsion subgroup of $\pi_4(\Bgl_1(\SS)) \cong \Z/24\Z$, and then treat the general case.

\begin{proof}[Proof of \hyperlink{ThmB}{Main Theorem}]
As explained in the construction of $E$ in the introduction, the composite 
\[ E \xrightarrow{\pi} M \xrightarrow{f} M(\Z/3\Z,4) \xrightarrow{x} \{-d\} \times \Bgl_{1}(\SS) \]
classifies the Spivak normal fibration $D_E$ of $E$. It cannot factor through the map $\BTop \to \Bgl_{1}(\SS)$ since this is impossible on $\pi_{4}$: The composite induces an injection $\Z/3\Z \rightarrow \Z/24\Z$, but $\pi_{4}(\BTop) \cong \Z \oplus \Z/2$ \cite[Lemma 9]{Milgram}. Therefore, $D_E$ does not admit a reduction to a euclidean bundle.

It remains to show $\ptso(E)=0$. To that end, note first that $E$ is simply connected. As a consequence of the $\pi$-$\pi$-theorem, we thus find that the map
\[ \Omega^d \pcS(E) \to \Omega^{d-1}[E \otimes \L^\q(\Z)] \]
is split injective on homotopy groups.

 In particular, to verify that $\ptso(E) = 0$, it suffices to show that $\pob(E) = 0$. Since $D_E$ is classified by the composite $xf\pi$ as above, we obtain a map $\Th(D_E) \to \Th(x)$, where $\Th(-)$ is the Thom spectrum functor. 
Applying Spanier--Whitehead duality and Atiyah duality, we obtain an induced map $D(\Th(x)) \to D(\Th(D_E)) \simeq E$ which induces the following commutative diagram whose vertical maps issue from the boundary map $\L^\n(\Z) \to \Sigma \L^\q(\Z)$.
\[\begin{tikzcd}
	\pi_{d}(D(\Th(x)) \otimes \L^\n(\Z)) \ar[r] \ar[d] & \pi_{d}(D(\Th(D_E)) \otimes \L^\n(\Z)) \ar[d] \ar[r,"\cong"] & \pi_d(E \otimes \L^\n(\Z)) \ar[d] \\
	\pi_{d-1}(D(\Th(x)) \otimes \L^\q(\Z)) \ar[r] & \pi_{d-1}(D(\Th(D_E)) \otimes \L^\q(\Z)) \ar[r,"\cong"] & \pi_{d-1}(E \otimes \L^\q(\Z))
\end{tikzcd}\]
We have argued in point (ix) of the above list that $-\pob(E)$ is the image of the normal L-theory fundamental class $\sigma^\vn(E)$ of $E$ under the right vertical map, or equivalently the image of the normal L-theory Thom class of $D_E$ under the middle vertical map followed by the lower right horizontal isomorphism. Furthermore, as this normal Thom class in natural for oriented bundles it follows that it is the image of the normal L-theory Thom class of $x$ under the left most upper horizontal map. As a result, to show that $\pob(E)=0$, and therefore that $\ptso(E)=0$, it suffices to show that 
\[ \pi_{d-1}(D(\Th(x)) \otimes \L^\q(\Z)) = 0.\]
To that end, one first computes $D(\Th(x))$ as follows. In general, first note that for a pointed space $X$,
a pointed map $f\colon \Sigma X \to \Bgl_1(\SS)$ is by adjunction equivalent to a pointed map $X \to \mathrm{Gl}_1(\SS) \subseteq \Omega^\infty \SS$, and hence by another adjunction equivalent to a map $\tilde{f}\colon \Sigma^\infty X \to \SS$. Then, the Thom spectrum $\Th(f)$ identifies with the cofibre of the map $\tilde{f}\colon \Sigma^\infty X \to \SS$, for example the argument in \cite[Section A.13]{Land} applies to general suspensions in place of spheres. 
For the case at hand, we consider $x$ as a map $\Sigma M(\Z/3\Z,3) \to \{-d\} \times \Bgl_1(\SS)$ and then compose with the map $+d \colon \{-d\} \times \Bgl_1(\SS) \to \{0\} \times \Bgl_1(\SS)$ to obtain a map $y\colon \Sigma M(\Z/3\Z,3) \to \Bgl_1(\SS)$. Then we have $\Th(x) = \Omega^d \Th(y)$ and $\Th(y) = \cofib(\tilde{y})$ where $\tilde{y} \colon \SS^3/3 \to \SS$ by construction classifies a generator of the 3-torsion subgroup of $\pi_3(\SS)$; note here that $[\SS^3/3,\SS] \to [\SS^3,\SS] = \pi_3(\SS)$ is injective (since $\pi_4(\SS) = 0$) and has image the 3-torsion subgroup. As a result, there is an equivalence $\Th(x) \simeq \Omega^d\cofib(\SS^{3}/3 \xrightarrow{\tilde{y}} \SS)$ and hence 
\[ D(\Th(x)) \simeq \Sigma^d\fib(\SS \xrightarrow{D\tilde{y}} \SS^{-4}/3)\]
where $D\tilde{y}$ still classifies a generator of the 3-torsion of $\pi_3(\SS)$.
Consequently, we obtain an exact sequence
\[\pi_0(\L^\q(\Z)) \stackrel{D\tilde{y}}{\lto} \pi_4(\L^\q(\Z)/3) \lto \pi_{d-1}(D(\Th(x)) \otimes \L^\q(\Z)) \to 0 \]
with first group isomorphic to $\Z$ and middle one isomorphic to $\Z/3\Z$. We need to show that the map labelled $D\tilde{y}$ in this sequence is surjective. To do so, we may localise at $3$ and use the equivalence $\L^\q(\Z)_{(3)} \simeq \KO_{(3)}$, see \cite[Corollary 5.4]{LN} and the subsequent remark. It then suffices to show that the composite
\[ \SS \to \KO \xrightarrow{D\tilde{y}} \KO \otimes \SS^{-4}/3 \]
where the first map is the unit of the ring spectrum $\KO$, is surjective on $\pi_0$. But by construction, this map is $\Omega^4$ applied to a map
\[ \SS^4 \to \SS/3 \to \KO/3 \]
where the first map induces the projection map $\Z \to \Z/3\Z$ on $\pi_4$.
Then the result follows from the fact that the map $\S/3 \to \KO/3$ above, which is induced by the unit of $\KO$, induces a bijection on $\pi_4$ \cite[Thm.\ 1.7]{Adams}.
This completes the proof for the case of $\bar{x} \in \pi_4(\Bgl_1(\SS)) \cong \Z/24\Z$ a generator of the 3-torsion. 

Let us now explain the case of an arbitrary non-trivial element $\bar{x} \in \pi_{l}(\Bgl_1(\SS))$ of odd order $n$ which is in the image of the $J$-homomorphism. Note that this implies $l=4m$ for some $m$.
One first observes that \cite[Theorem 4.7 \& Remark 4.9]{Brumfiel}, together with the equivalence $\Top/\PL \simeq K(\Z/2,3)$ \cite[Essay V, Theorem 5.5]{KS}, says that the odd torsion of $\pi_{4m}(\BTop)$ maps isomorphically onto the kernel of Adams' $e$-invariant via the canonical map $\BTop \to \Bgl_1(\S)$. Since $e(\bar{x})$ is non-zero by \cite[Theorem 1.5 \& 1.6]{Adams} we deduce that the Spivak normal fibration of the Poincar\'e complex $E$ constructed from $\bar{x}$ does not admit a reduction to a euclidean bundle. 

To see that $\ptso(E) = 0$, with the same notation and arguments as above, it again suffices to show that the composite
\[ \S^{4m} \xrightarrow{D\tilde{y}} \S/n \to \KO/n \]
is surjective on $\pi_{4m}$, where again the second map is induced by the unit of $\KO$. To do this, one can reduce to the case where $n = p^j$ for some odd prime $p$ by the Chinese remainder theorem. Then we recall the fibre sequence
\begin{equation}\label{eq:K1-local-sphere}\tag{$\ast$}
\begin{tikzcd} L_{\KU/p}\S \ar[r,"u"] & \KO_p \ar[r,"\psi^t-1"] & \KO_p \end{tikzcd}
\end{equation}
where $t\in \Z_p^\times$ is a topological generator and $\psi^t$ the corresponding Adams operation. The map $u$ appearing in this sequence is then a ring map so that the mod $n$ unit map $\S/n \to \KO/n$ of $\KO$ factors as the composite $\S/n \to L_{\KU/p}\S/n \to \KO/n$. Hence, it suffices to show that in the composite
\[ \S^{4m} \to \S/n \to L_{\KU/p}\S/n \to \KO/n \]
the composite of the first two and the last map are each surjective on $\pi_{4m}$. To see the first claim, we consider the following diagram, whose vertical maps are the connecting maps in the mod $n$ reduction sequences.
\[ \begin{tikzcd}
	\S^{4m} \ar[r,"D\tilde{y}"] \ar[dr,"\overline x"'] & \S/n \ar[r] \ar[d] & L_{\KU/p}\S/n \ar[d] \\
		& \Sigma \S \ar[r] & \Sigma L_{\KU/p}\S
\end{tikzcd}\]
Now, the image of the $J$-homomorphism in $\pi_{4m-1}(\S)$ maps isomorphically onto $\pi_{4m-1}(L_{\KU/p}\S)$ as follows from \cite[Theorem 4.2.1]{Kochman}. In particular, the composite $\S^{4m} \to \Sigma L_{\KU/p}\S$ classifies an element of order $n$ in the cyclic group $\pi_{4m}(\Sigma L_{\KU/p}\S)$. Moreover, it follows from the long exact sequence in homotopy groups associated to the fibre sequence \eqref{eq:K1-local-sphere}, the fact that $t \in \Z_p^\times$ has infinite order, and the fact that $\psi^t$ is a ring map that $\pi_{4m}(L_{\KU/p}\S)=0$. Thus the rightmost vertical map in the above diagram is injective on $\pi_{4m}$ with image the $n$-torsion subgroup. As an element of order $n$ in a cyclic $n$-torsion group, the top horizontal composite therefore represents a generator of $\pi_{4m}(L_{\KU/p}\S/n)$ and is thus surjective on $\pi_{4m}$.

It now remains to show that the map $L_{\KU/p}\S/n \to \KO/n$ induces a surjection on $\pi_{4m}$. Tensoring the fibre sequence \eqref{eq:K1-local-sphere} with $\S/n$ and investigating the long exact sequence on homotopy groups, it remains to show that $\psi^t-1$ induces the zero map on $\pi_{4m}(\KO/n)$. 
Since the map $\pi_{4m}(\KO_p) \to \pi_{4m}(\KO/n)$ is isomorphic to the standard surjection $\Z_p \to \Z/n$, it suffices to argue that the effect of $\psi^t-1$ on $\pi_{4m}(\KO_p)$, which is multiplication by a number $s$, is divisible by $n$. But $s$ is the order of $\pi_{4m-1}(L_{\KU/p}\S)$, and $n$ is the order of an element in this group, so indeed $n$ divides $s$. This finishes the proof.
\end{proof}

\begin{Rmk}\label{Rmk:ob(E)}
Let us briefly note what the above argument says about the (non-periodic) reduction obstruction $\ob(E)$, which we know to be non-zero since the Spivak normal fibration of $E$ does not admit a reduction to a stable euclidean bundle. Running the same argument as in the above proof, we are lead to consider the commutative diagram
\[\begin{tikzcd}
	\pi_d(D(\Th(x))\otimes \L^\n_{1/2}(\Z)) \ar[r] \ar[d] & \pi_d(E\otimes \L^\n_{1/2}(\Z)) \ar[d] \\
	\pi_{d-1}(D(\Th(x))\otimes \tau_{\geq1}\L^\q(\Z)) \ar[r] & \pi_{d-1}(E\otimes \tau_{\geq 1}\L^\q(\Z))
\end{tikzcd}\]
and note that the element $\ob(E)$ is the image of the normal L-theory Thom class of $x$ along either of the two displayed composites. We may then compute the lower left corner analoguously as before, and obtain that it sits in an exact sequence
\[ \pi_0(\tau_{\geq1}\L^\q(\Z)) \to \pi_{4m}(\tau_{\geq1}\L^\q(\Z)/n) \to \pi_{d-1}(D(\Th(x))\otimes \tau_{\geq1}\L^\q(\Z)) \to 0\]
where now the first term is $0$ rather than $\Z$ as in the case of the periodic reduction obstruction considered above, and the second term is isomorphic to $\Z/n\Z$ as $m\geq 1$. This shows that $\ob(E)$ is the image of an $n$-torsion element along the lower horizontal composite in the above diagram.
\end{Rmk}

\section{Consequences and addenda}

We collect a few statements that lie in the vicinity of our arguments above.

\subsubsection*{Relation to the literature}
\begin{enumerate}
\item Our \hyperlink{ThmB}{Main Theorem} invalidates the use of the statement, which appears in the proof of the main result of \cite{BFMW}, that a Poincar\'e complex with vanishing periodic total surgery obstruction has a reducible Spivak fibration, see \cite{erratum}.

If, \cite[Main theorem]{BFMW} is correct as stated, then the Poincar\'e complexes $E$ constructed before the proof of the \hyperlink{ThmB}{Main Theorem} are homotopy equivalent to closed homology manifolds whose Spivak normal fibrations do not admit reductions to stable euclidean bundles provided $\bar{x} \in \pi_\ell(\Bgl_1(\SS))$ has odd order and is in the image of the $J$-homomorphism. This would invalidate \cite[Theorem 16.6]{FP}, which we regard as the more likely resolution. At this point we do, however, not have any further insight to offer. In particular, we do not know whether our complexes $E$ are realised by homology manifolds, but see the final two items in this remark.
\item The discussion of Quinn's resolution obstruction for homology manifolds in \cite[Section~3]{BFMW} also uses that the Spivak normal fibration admits a reduction to a euclidean bundle, and hence suffers the same defect as \cite[Main Theorem]{BFMW}.
\item In \cite{erratum}, a sufficient condition for the existence of a euclidean reduction for Poincar\'e complexes with trivial periodic total surgery obstruction is mentioned. Since the method of proof is very similar to what we have explained above, we record a general statement and proof in \cref{ThmA} below.
\item We observe in \cref{Prop:2-patch} below that the Poincar\'e complexes $E$ we construct admit a $\pi$-$\pi$-2-patch structure. An argument for the main result of \cite{BFMW} for Poincaré complexes admitting such structures would therefore suffice to show our complexes are in fact realised by closed homology manifolds; Weinberger has indicated to us that this special case may be significantly simpler than the general one.
\item We will show in \cref{Cor:fibrewise-tso-0} below that even the fibrewise periodic total surgery obstructions of the fibrations $\pi \colon E \to M$ constructed before the \hyperlink{ThmB}{Main Theorem} vanish. Assuming that the family version of \cite[Main theorem]{BFMW}, see point (xi) above, is correct even only for fibrations whose typical fibre is a topological manifold it follows that $\pi$ is homotopy equivalent to block bundles with typical fibre a homology manifold. 
Again, this would certainly imply that $E$ is homotopy equivalent to a homology manifold. 
\end{enumerate}
In the remainder of this note, we discuss points (iii), (iv), and (v) above in more detail.

\subsubsection*{Positive results about euclidean reductions.}

The relation between vanishing of the periodic and non-periodic reduction obstructions can be described fairly directly:

\begin{Prop}\label[Prop]{ThmA}
Let $X$ be an oriented $d$-dimensional Poincar\'e complex such that $\pob(X)=0$. Then the following statements are equivalent:
\begin{enumerate}
\item The Spivak normal fibration of $X$ admits a reduction to a stable euclidean bundle,
\item there exists a degree one map $M \to X$ where $M$ is an oriented closed topological manifold, and
\item the fundamental class $[X] \in H_d(X;\Z\adj)$ lifts to a fundamental class in $\ko_d(X)\adj$.
\end{enumerate}
\end{Prop}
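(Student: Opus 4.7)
We prove $(i) \Rightarrow (ii) \Rightarrow (iii) \Rightarrow (i)$. The implication $(i) \Rightarrow (ii)$ is classical Browder--Novikov theory: a stable topological reduction of $\nu_X$ yields a degree one normal map $f \colon M \to X$ from a closed topological manifold $M$, and forgetting the bundle data gives a degree one map. For $(ii) \Rightarrow (iii)$, given a degree one map $f\colon M \to X$ with $M$ a closed oriented topological manifold of dimension $d$, we use the Sullivan orientation $\MSTop\adj \to \ko\adj$ (obtained by composing the Sullivan--Ranicki orientation $\MSTop \to \L^\s(\Z)$ with $\tau_{\geq 0}\L^\s(\Z)\adj \simeq \ko\adj$) to equip $M$ with a canonical $\ko\adj$-fundamental class $[M]_\ko \in \ko_d(M)\adj$ lifting $[M]$; its pushforward $f_*[M]_\ko \in \ko_d(X)\adj$ then lifts $[X]$.

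The main step is $(iii) \Rightarrow (i)$. Smashing $X$ with the cofiber sequence $\tau_{\geq 1}\L^\q(\Z) \to \L^\q(\Z) \to \tau_{\leq 0}\L^\q(\Z)$ yields a long exact sequence on homotopy groups in which the hypothesis $\pob(X) = 0$ forces $\ob(X)$ to be the image under the connecting homomorphism $\delta \colon \pi_d(X \otimes \tau_{\leq 0}\L^\q(\Z)) \to \pi_{d-1}(X \otimes \tau_{\geq 1}\L^\q(\Z))$ of some class $c$. Since $X$ has dimension $d$ and $\tau_{\leq 0}\L^\q(\Z)$ has vanishing positive homotopy, the Atiyah--Hirzebruch spectral sequence degenerates in total degree $d$ to identify $\pi_d(X \otimes \tau_{\leq 0}\L^\q(\Z)) \cong H_d(X;\Z)$. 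Tracing through the construction of $\tso(X)$ from the visible symmetric signature $\sigma^\vs_{1/2}(X)$, the canonical preimage $c$ is identified with the fundamental class $[X]$. Hence $\ob(X) = 0$ is equivalent to $[X]$ lifting along the edge map $\pi_d(X \otimes \L^\q(\Z)) \to H_d(X;\Z)$.

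This lifting is checked separately after inverting $2$ and $2$-locally. After inverting $2$, the equivalences $\L^\q(\Z)\adj \simeq \L^\s(\Z)\adj$ (from $\L^\n(\Z)\adj = 0$ in the Preliminaries) and $\tau_{\geq 0}\L^\s(\Z)\adj \simeq \ko\adj$ convert the $\ko\adj$-lift supplied by $(iii)$ into an $\L^\q(\Z)\adj$-lift of $[X]$, giving $\ob(X)\adj = 0$. At the prime $2$, the relevant piece of the connecting homomorphism is controlled by the first nontrivial $k$-invariant of $\L^\q(\Z)_{(2)}$; its vanishing on $[X]$ must be extracted from Wu-class identities for the oriented Poincar\'e complex $X$ combined with the integral vanishing $\pob(X) = 0$. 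The principal obstacles we expect are the identification of the canonical lift $c$ with $[X]$, which requires carefully unwinding the equivalences underlying the definition of $\tso(X)$ via points (vii) and (ix), and the $2$-local analysis, which rests on the detailed Postnikov structure of $\L^\q(\Z)_{(2)}$ near degree zero.
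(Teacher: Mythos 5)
Your implications (i)\,$\Rightarrow$\,(ii) and (ii)\,$\Rightarrow$\,(iii) agree with the paper's proof, and the overall framework for (iii)\,$\Rightarrow$\,(i)—working with the long exact sequence coming from the truncation cofibre sequence for $\L^\q(\Z)$ and splitting the analysis at the prime $2$—is also the route the paper takes. Two remarks on the details.

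First, you go to the trouble of identifying the specific lift $c\in\pi_d(X\otimes\tau_{\leq0}\L^\q(\Z))\cong H_d(X;\Z)$ with $[X]$, which you flag as a delicate unwinding of definitions. This step is not needed: since $H_d(X;\Z)\cong\Z$ is generated by $[X]$ (for $X$ connected), showing that $[X]$ is in the image of $(\ast)$ already shows $(\ast)$ is surjective, and then the connecting map $\delta$ is zero by exactness, so \emph{every} class, including $\ob(X)$, that lies in the image of $\delta$ must vanish. The paper argues exactly this way and thereby sidesteps your ``principal obstacle.''

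Second, and this is a genuine gap, your treatment of the prime $2$ is not correct. You propose to control ``the first nontrivial $k$-invariant of $\L^\q(\Z)_{(2)}$'' via Wu-class identities and the vanishing of $\pob(X)$, but this is both unnecessary and heading in the wrong direction: by a classical theorem of Sullivan (in the symmetric case; see Morgan--Sullivan, and Ranicki in general), the spectrum $\L^\q(\Z)_{(2)}$ splits as a wedge of shifted Eilenberg--MacLane spectra $\Sigma^{4n}H\Z_{(2)}$ and $\Sigma^{4n+2}H\Z/2$. In particular \emph{all} of its $2$-local $k$-invariants vanish, so the truncation map $\L^\q(\Z)_{(2)}\to\tau_{\leq0}\L^\q(\Z)_{(2)}$ is split surjective and hence $(\ast)$ is surjective $2$-locally without any hypothesis on $X$. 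There is no role for Wu classes here, and your sketch leaves this part of the argument unproven. The $2$-local Eilenberg--MacLane splitting is the key input you are missing; once you have it, the $2$-local half of the proof closes immediately.

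Your odd-primary argument via $\L^\q(\Z)\adj\simeq\KO\adj$ and the footnoted identification $\pi_d(X\otimes\tau_{\geq0}E)\xrightarrow{\cong}\pi_d(X\otimes E)$ for $d$-dimensional $X$ is correct and matches the paper.
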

\begin{proof}
An argument of Sullivan's shows that (i) implies that there exists a degree one \emph{normal} map $M \to X$ for some closed topological manifold $M$, see \cite[Prop.\ 10.2]{Wall-book} or \cite[Theorem 7.19]{Lueck} for the smooth case. Using topological transversality \cite[Essay III, Section 1]{KS} the same argument applies to the topological case. In particular, (i) implies (ii). Given (ii), we find that $[M \to X]$ determines an element of $\MSTop_d(X)$ which lifts the fundamental class $[X] \in H_d(X;\Z)$. In particular, after inverting $2$, one may use the Sullivan--Ranicki orientation $\MSTop\adj\to \tau_{\geq0}\L^\s(\Z)\adj \simeq \ko\adj$ to lift the fundamental class in $H_d(X;\Z\adj)$ to $\ko_d(X)\adj$; see again \cite[Corollary 5.4 \& following Remark]{LN} for the equivalence $\L^\s(\Z)\adj \simeq \KO\adj$ of $\mathbb{E}_\infty$-ring spectra.

Finally, consider the exact sequence
\[ \pi_d(X \otimes \L^\q(\Z)) \xrightarrow{(\ast)} \pi_d(X \otimes \tau_{\leq0}\L^\q(\Z)) \to \pi_{d-1}(X\otimes \tau_{\geq1}\L^\q(\Z)) \to \pi_{d-1}(X\otimes \L^\q(\Z)) \]
and recall that the second to last group contains the element $\ob(X)$ which is sent to $\pob(X)=0$ in the final group above. Note that the map labelled $(\ast)$ is 2-locally surjective, as follows from the 2-local splitting of $\L^\q(\Z)$ into Eilenberg-Mac Lane spectra. Away from $2$, we may use the equivalence $\L^\q(\Z)\adj \simeq \KO\adj$ and consider the following commutative diagram.
\[ \begin{tikzcd}
	\pi_d(X\otimes \ko)\adj \ar[r] \ar[d,"\cong"] & \pi_d(X \otimes \tau_{\leq0}\ko)\adj \ar[r,"\cong"] \ar[d,"\cong"] & H_d(X;\Z\adj) \\
	\pi_d(X \otimes \KO)\adj \ar[r] & \pi_d(X\otimes \tau_{\leq0}\KO)\adj
\end{tikzcd}\]
Since $X$ has trivial homology in degrees $>d$, it follows from the Atiyah--Hirzebruch spectral sequence that for any spectrum $E$, the canonical map $\pi_d(X\otimes \tau_{\geq0}E) \to \pi_d(X \otimes E)$ is an isomorphism. In particular, in the above diagram, the vertical maps are isomorphisms. Hence, under assumption (iii) we deduce the map $(\ast)\adj$, which is isomorphic to the lower horizontal map in the above diagram, is also surjective. This shows that $\ob(X) =0$ and hence (i).
\end{proof}
We note that $\pob(X) = 0$ is implied by $\ptso(X) = 0$, in which case the following also appears in \cite{erratum}.
\begin{Cor}
Let $X$ be an oriented Poincar\'e complex with $\pob(X) =0$. If $\dim(X) \leq 6$, then the Spivak normal fibration admits a reduction to a stable euclidean bundle.
\end{Cor}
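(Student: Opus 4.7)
The plan is to apply the previous Proposition and verify condition (iii), that is, to show that the fundamental class $[X]\in H_d(X;\Z\adj)$ admits a lift to $\ko_d(X)\adj$. Using the fibre sequence $\tau_{\geq 1}\ko\adj \to \ko\adj \to H\Z\adj$, the obstruction lives in $\pi_{d-1}(X\otimes \tau_{\geq 1}\ko\adj)$. After inverting $2$, $\pi_*\ko\adj$ is concentrated in degrees $4k$ for $k \geq 0$, so $\tau_{\geq 1}\ko\adj$ is $3$-connected with first non-vanishing homotopy group $\pi_4 = \Z\adj$. The Atiyah--Hirzebruch spectral sequence immediately gives vanishing of the obstruction group for $d \leq 4$, handling these cases unconditionally.

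For $d \in \{5,6\}$ the same spectral sequence identifies the remaining obstruction with a single class $d_5([X]) \in H_{d-5}(X;\Z\adj)$ (that is, in $\Z\adj$ for $d = 5$ or in $H_1(X;\Z\adj)$ for $d=6$), governed by the first $k$-invariant $H\Z\adj \to \Sigma^5 H\Z\adj$ of $\ko\adj$. Under the hypothesis $\dim X \leq 6$, the higher Postnikov pieces of $\L^\q(\Z)\adj \simeq \KO\adj$ fall outside the range accessible to the cellular filtration of $X$, so that the exact sequence appearing in the proof of the Proposition collapses after inverting $2$ to the four-term sequence
\[
\ko_d(X)\adj \longrightarrow H_d(X;\Z\adj) \xrightarrow{\ \partial\ } H_{d-5}(X;\Z\adj) \longrightarrow \ko_{d-1}(X)\adj,
\]
with $\partial$ induced by the first $k$-invariant of $\ko\adj$. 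The hypothesis $\pob(X) = 0$ places $\ob(X)\adj$ in the image of $\partial$. I would then identify $\ob(X)\adj$ with $d_5([X])$ itself by tracing the construction of $\ob(X)$ from $\tso(X)$ through the octahedron relating $\cS(X)$, $\pcS(X)$ and $X\otimes\tau_{\leq 0}\L^\q(\Z)$, so that combined with the $2$-local splitting of $\L^\q(\Z)$ into Eilenberg--Mac Lane spectra, used exactly as in the proof of the Proposition, one obtains the desired lift of $[X]$ to $\ko_d(X)\adj$.

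The main obstacle is the precise identification of $\ob(X)\adj$ with $d_5([X])$, which requires tracking the interaction between the Postnikov tower of $\L^\q(\Z)\adj \simeq \KO\adj$ and the surgery-theoretic definitions of $\tso(X)$, $\ptso(X)$, $\ob(X)$, and $\pob(X)$ recalled in the Preliminaries. I expect this to reduce to a diagram chase in the commutative square of cofibre sequences linking $\cS(X)$, $\pcS(X)$, $X\otimes \tau_{\geq 1}\L^\q(\Z)$ and $X\otimes \tau_{\leq 0}\L^\q(\Z)$, using naturality of boundaries.
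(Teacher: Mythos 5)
The paper's proof is a one-liner and takes a genuinely different route: it verifies condition~(ii) of the Proposition by invoking Thom's theorem that every integral homology class of degree $\leq 6$ is represented by a closed smooth (hence topological) manifold, giving a degree-one map $M\to X$. You instead try to verify condition~(iii) directly by obstruction theory against the Postnikov tower of $\ko\adj$. The two routes are independent uses of the equivalence in the Proposition, and comparing them is instructive: Thom's theorem does in one line what your argument has to fight for.

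Your argument is fine in low degrees, and in fact slightly more than you claim: since the first $k$-invariant of $\ko\adj$ (a class in $H^5(H\Z;\Z)\adj\cong\Z/3$) is torsion, the induced operation $H_d(X;\Z\adj)\to H_{d-5}(X;\Z\adj)$ is a $3$-torsion homomorphism, so for $d=5$ its image lands in the torsion-free group $H_0(X;\Z\adj)$ and must vanish. Hence $d\leq 5$ is handled unconditionally, without even using $\pob(X)=0$ beyond what the Proposition already requires.

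The genuine gap is the case $d=6$, where the obstruction lands in $H_1(X;\Z\adj)$, which can very well contain $3$-torsion. Your plan is to identify $\ob(X)\adj$ with the Postnikov obstruction $d_5([X])$ and conclude from $\pob(X)=0$. This is not ``a diagram chase'': the element $\ob(X)$ is defined from the visible symmetric signature $\sigma^\vs_{1/2}(X)$ and measures the failure of the Spivak fibration to reduce, while $d_5([X])$ is an operation applied to the fundamental class in ordinary homology, with no reference to the Poincar\'e structure. The only bridge between these two that the paper establishes is precisely the Proposition itself — its (iii)$\Rightarrow$(i) direction shows that $\ob(X)\adj$ vanishes \emph{once you already know} that $[X]$ lifts to $\ko_d(X)\adj$, and its (ii)$\Rightarrow$(iii) direction supplies such a lift from a degree-one manifold map. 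Asserting $\ob(X)\adj=d_5([X])$ outright and then feeding it back through the hypothesis $\pob(X)=0$ is, in effect, asking the Proposition to prove one of its own inputs; you would need an independent argument for this identification, and none is given. By contrast, Thom's realizability result supplies the missing input (condition~(ii)) externally and cleanly, which is why the paper's proof terminates immediately.
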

\begin{proof}
It follows from work of Thom \cite{Thom} that any homology class of degree $\leq 6$ is represented by a smooth, in particular by a topological, closed manifold, so condition (ii) of \cref{ThmA} is satisfied.
\end{proof}

\begin{Rmk}
In the above argument, we have used that for an oriented Poincar\'e complex $X$, $\pob(X) =0$ implies that $\ob(X)$ vanishes 2-locally. One may wonder whether the same is true for the total surgery obstructions: If $\ptso(X) = 0$, does it follow that $\tso(X)$ vanishes 2-locally? We do not know the answer to this question currently. Note, however, that the 2-local splitting of $\L^\q(\Z)$ does not induce a $2$-local splitting of $\pcS(X)$ into $\cS(X)$ and a second summand: Indeed, suppose that $X$ is a $d$-dimensional aspherical Poincar\'e complex and $\pi_1(X)$ is a Farrell--Jones group. Then $\pcS(X) = 0$ but $\cS(X)_{(2)} \simeq [X \otimes  \tau_{\leq0}\L^\q(\Z)]_{(2)} \neq 0$. Nevertheless, at least conjecturally, one has $\tso(X)=0$ whenever $X$ is aspherical, see \cite[Conjecture 9.178]{Lueck-FJ}. 
\end{Rmk}

\subsubsection*{Patch structures}
Let us recall that a $\pi$-$\pi$-2-patch structure on a Poincar\'e complex $X$ consists of two topological manifolds $M$ and $M'$ whose boundary inclusions $\partial M \subseteq M$ and $\partial M' \subseteq M'$ are equivalences on fundamental groupoids (that is, on $\pi_0$ and $\pi_1$ for all basepoints), and homotopy equivalences $\varphi \colon \partial M \simeq \partial M'$ as well as $\psi \colon X \simeq M \cup_\varphi M'$. A more general notion of patch structures on Poincar\'e complexes was introduced and studied by Jones \cite{Jones}.
\begin{Prop}\label[Prop]{Prop:2-patch}
The Poincar\'e complexes $E$ constructed above admit $\pi$-$\pi$-2-patch structures provided $\ell\geq 3$.
\end{Prop}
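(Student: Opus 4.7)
The plan is to use the natural surgery decomposition of $M$ to produce a decomposition of $E$ by pulling back along $\pi$, and then to upgrade each piece to a manifold. Write
\[
U_1 = \overline{(S^\ell \times S^{m-\ell}) \setminus \nu(\alpha)}, \qquad U_2 = D^{\ell+1} \times S^{m-\ell-1},
\]
which are glued along $S^\ell \times S^{m-\ell-1}$ to give $M = U_1 \cup_\partial U_2$ as a decomposition by codimension-zero manifolds with boundary. The hypothesis $\ell \geq 3$ together with $m > 2\ell$ (so that $m - \ell - 1 \geq \ell$) ensures that $U_1, U_2$ and $\partial U_i$ are all simply connected; for $U_1$ this uses that $\alpha(S^\ell)$ has codimension $m - \ell \geq \ell + 1 \geq 4$. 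Set $N_i := \pi^{-1}(U_i)$. Since the fibre of $\pi$ is $S^k$ with $k \geq \ell + 1 \geq 4$, the long exact sequences of the restricted fibrations show that $N_i$ and $\partial N_i$ are simply connected, and each $(N_i, \partial N_i)$ is a Poincar\'e pair of formal dimension $d = m + k$; in particular the $\pi$-$\pi$-condition holds trivially.

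Next I analyse the spherical fibration $-xf$ over each piece. Over $U_2$, one arranges $f|_{U_2}$ to factor through the collapse $D^{\ell+1} \times S^{m-\ell-1} \to D^{\ell+1}$ followed by the characteristic map of the $(\ell+1)$-cell of $M(\Z/n\Z, \ell)$; since $D^{\ell+1}$ is contractible, $xf|_{U_2}$ is null-homotopic, so $N_2 \simeq U_2 \times S^k$ is already a smooth manifold. Over $U_1$, the map $f|_{U_1}$ factors through the projection $p\colon U_1 \to S^\ell$ followed by the bottom-cell inclusion, so the spherical fibration over $U_1$ is the pullback along $p$ of the fibration $F \to S^\ell$ classified by $-\bar{x}$. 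For the $\bar{x}$ actually used in this paper---the $3$-torsion of $\pi_4(\Bgl_1(\SS))$ of the \hyperlink{ThmB}{Main Theorem} and the image-of-$J$ classes of \cref{Rmk:general-x}---$\bar{x}$ lifts to $\pi_\ell(\BO(k+1))$ once $k$ is taken in the stable range, so $F \to S^\ell$ is realised as a smooth sphere bundle and $N_1 = p^*F$ is a smooth manifold with boundary.

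Finally, both $\partial N_1$ and $\partial N_2$ are total spaces of the spherical fibration over $\partial U_1 = S^\ell \times S^{m-\ell-1}$ classified by the restriction of $\bar{x} \circ p$, which factors through $\alpha \circ \mathrm{pr}_1$, a degree-$n$ self-map of $S^\ell$; this class is therefore $n\bar{x} = 0$, so both boundaries are fibre-homotopy equivalent to $S^\ell \times S^{m-\ell-1} \times S^k$. Taking $\varphi\colon \partial N_1 \simeq \partial N_2$ to be the homotopy equivalence induced by the identification in $E$ then exhibits $E \simeq N_1 \cup_\varphi N_2$ as a $\pi$-$\pi$-2-patch. The principal subtlety is the unstable $\BO(k+1)$-lift of $\bar{x}$: for $\bar{x}$ outside the image of $J$ this step fails, and one would instead have to invoke Wall's $\pi$-$\pi$ theorem to replace $(N_1, \partial N_1)$ by a manifold pair, which in turn requires a euclidean reduction of its Spivak normal fibration that need not exist in that generality.
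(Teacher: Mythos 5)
Your decomposition of $M$ comes from the surgery handle structure rather than the one the paper actually uses, and this leads to a genuine gap that you flag yourself at the end but do not close. The paper's proof uses the suspension decomposition $M(\Z/n\Z,\ell)\cong \Sigma M(\Z/n\Z,\ell-1)$: one takes a transverse preimage $N\subseteq M$ of the equatorial copy of $M(\Z/n\Z,\ell-1)$, after ambient surgery makes $N$ simply connected (here $\ell\geq 3$ enters to ensure $M(\Z/n\Z,\ell-1)$ is simply connected), and then $M$ splits as $M^+\cup_N M^-$ with $M^\pm$ the preimages of the two cones on $M(\Z/n\Z,\ell-1)$. Since each cone is contractible, the composite $M^\pm\to M(\Z/n\Z,\ell)\to\mathrm{Baut}_*(S^k)$ is null, so the $S^k$-fibration is trivial over \emph{both} halves, yielding $E\simeq (M^+\times S^k)\cup(M^-\times S^k)$ glued along a homotopy equivalence. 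This works uniformly for every $\bar x$ of order $n$, with no condition on $\bar x$ beyond $\ell\geq 3$.

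In your decomposition $M=U_1\cup U_2$, by contrast, the fibration is trivial over $U_2$ but over $U_1$ it pulls back $\bar x$ along the projection to $S^\ell$, so $N_1=\pi^{-1}(U_1)$ is not a manifold in any evident way. You correctly observe that making it one requires either a lift of $\bar x$ to $\BO(k+1)$ (which exists only for $\bar x$ in the image of $J$) or an application of Wall's $\pi$-$\pi$ theorem to $(N_1,\partial N_1)$, which in turn requires a euclidean reduction of the Spivak normal fibration of $N_1$. But that reduction is governed by $\bar x$ and need not exist --- indeed its failure for $E$ itself is the whole point of the construction --- so this fallback does not close the gap. Since \cref{Prop:2-patch} is stated for the general construction (any $\bar x\in\pi_\ell(\Bgl_1(\SS))$ of order $n$, $\ell\geq 3$), your argument does not prove it as stated. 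The fix is not to concentrate all the nontriviality of the spherical fibration onto one piece: cut $M$ along the fibers of $f$ using the suspension structure of the Moore space, as the paper does, so that the fibration becomes trivial on each side.
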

\begin{proof}
We note that $M(\Z/n\Z,\ell) \cong \Sigma M(\Z/n\Z,\ell-1)$ and consider a transverse inverse image $N \subseteq M$ of $M(\Z/n\Z,\ell-1)$ along the map $f \colon M \to M(\Z/n\Z,\ell)$ appearing in \cref{Construction}. By construction $N \subseteq M$ is a codimension one submanifold with trivial normal bundle. Since $M(\Z/n\Z,\ell-1)$ is simply connected, by ambient surgeries on $N$, one can achieve that $N$ is simply connected as well, see \cite{Quinn2} for general results along these lines. Consequently, $M\setminus N$ decomposes into a disjoint union of two pieces which are the interiors of compact manifolds $M^+$ and $M^{-}$ with common boundary $N$. Moreover, $M^\pm$ is again simply connected, so that the inclusion $N \subseteq M^{\pm}$ is an equivalence on fundamental groupoids.
In other words, $M$ is obtained from two manifolds $M^{+}$ and $M^{-}$ by glueing along a diffeomorphism $\partial M^{+} \cong \partial M^{-}$. 
Moreover, the fibration classified by $M \to \mathrm{Baut}_{*}(S^{k})$ is trivial when restricted to $M^{\pm}$ as it then factors through the contractible upper/lower cones of the suspension $\Sigma M(\Z/n\Z,l-1)$.
It follows that $E$ is obtained from the two manifolds $M^{+} \times S^{k}$ and $M^{-}\times S^{k}$ by glueing along a homotopy equivalence $\partial M^{+} \times S^{k} \simeq \partial M^{-}\times S^{k}$, which is the promised $\pi$-$\pi$-2-patch structure on $E$.
\end{proof}

\subsubsection*{The fibrewise periodic total surgery obstruction}
Finally, we aim to show that even the fibrewise periodic total surgery obstructions of the fibrations we have constructed vanish. We will use a number of observations about them that we explain first. 

\begin{Preliminaries}\label[Construction]{Construction:fibrewise-orientation}
Let $P$ be an oriented Poincaré complex. Whenever $R$ is a ring spectrum under $\MSG$ the space $P$ is equipped with a canonical orientation class in $R$-homology, that is a point $\sigma^R(P)$ in $\Omega^d[ P\otimes R]$, capping with which induces Poincar\'e duality in $R$-(co)homology. We will mostly use $R=\L^\n(\Z)$ via the normal Sullivan--Ranicki orientation, in which case we denoted this orientation class by $\sigma^\vn(P)$ earlier. 

Assume now that $\pi\colon E \to B$ is an oriented $P$-fibration. On account of the $\MSG$ orientation the fundamental classes of its fibres assemble into a section $\sigma^R(\pi)$ of the parametrised spectrum $\Omega^d[E\otimes_B R] = \Omega^d \pi_!\pi^*r^*(R)$ over $B$, where $r\colon B \to \ast$ denotes the unique map. Fibrewise Poincar\'e duality in $R$-(co)homology identifies this parametrised spectrum with $\pi_*\pi^*r^*(R)$, whose fibre over some $b \in B$ is the space of maps $\Map(E_b,R)$. Therefore, the space of sections of $\Omega^d[E\otimes_B R]$ identifies canonically with $(r\pi)_*(r\pi)^*(R) \simeq \Map(E,R)$ and under this equivalence, the fibrewise orientation class $\sigma^R(\pi)$ is the point classified by the unit of the adjunction $R \to (r\pi)_*(r\pi^*)$. 

If $\pi$ is equipped with a section $s$, we will refer to the image of $\sigma^R(\pi)$ under the map on section spaces induced by the projection $\Omega^d[E \otimes_B R] \rightarrow \Omega^d[(E,s(B)) \otimes_B R]$ as the reduced fibrewise orientation class. 
\end{Preliminaries}

Let us specialise this to the case where $P=S^d$ is a sphere. We denote by $\pi_\st$ the fibre of the canonical augmentation $[E \otimes_B \S] \to [B \otimes_B \S]$ induced by $\pi$, equivalently, of the counit map $\pi_!\pi^*(\S_B) \to \S_B$. In more classical language $\pi_\st$ is classified by the composite 
\[B \to \Baut^+(S^d) \to \Baut_*^+(S^{d+1}) \to \BSG \times \{d+1\} \to \BSG \times \{d\}, \]
where the latter map is induced by $-1 \in \Z = \pi_0(\Pic(\S))$, the middle map is induced by $\Sigma^\infty \colon \An_* \to \Sp$ and the first map is induced by $\Sigma \colon \An \to \An_*$. 

By \cite{WH}, the parametrised spectrum $\pi^*\pi_\st^{-1}$ is the relative dualising spectrum of $\pi$.
We will require the explicit identification below, so we give a short proof. To this end, note that using the projection formula for $\pi_!$, we have the map
\[ c_\pi \colon \SS_B = \pi_\st \otimes_B \pi_\st^{-1} \to \pi_!\pi^*(\SS_B) \otimes \pi_\st^{-1} = \pi_!\pi^*(\pi_\st^{-1}). \]
where the middle one is induced by the tautological map $\pi_\st \to \pi_!\pi^*(\S_B)$ that is part of the defining fibre sequence of $\pi_\st$. 
\begin{Lemma}\label[Lemma]{dualising-spectrum}
	The natural transformation
	\[ \pi_*(-) \xrightarrow{c_\pi} \pi_*(-) \otimes_B \pi_!\pi^*(\pi_\st^{-1}) = \pi_!(\pi^*\pi_*(-) \otimes_E \pi^*\pi_\st^{-1}) \to \pi_!(- \otimes_E \pi^*\pi_\st^{-1}) \]
	induced by the projection formula for $\pi_!$ and the counit $\pi^*\pi_* \to \id$ is an equivalence.
\end{Lemma}
\begin{proof}
	It suffices to check that this transformation is a fibrewise equivalence.
	So let $b \colon * \to B$ be arbitrary, let $E_b \simeq S^d$ denote the fibre of $\pi$ over $b$, and let $p \colon E_b \to *$ as well as $j \colon E_b \to E$ be the canonical maps.
	Since the projection formula is compatible with base change, applying $b^*$ yields the natural transformation
	\[ p_*j^*(-) \xrightarrow{b^*c_\pi} p_*(-) \otimes p_!p^*b^*\pi_\st^{-1} = p_!(p^*p_*(-) \otimes_{E_b} p^*b^*\pi_\st^{-1}) \to p_!(- \otimes_{E_b} p^*b^*\pi_\st^{-1}). \]
	The map $b^*c_\pi$ is the inclusion $\SS \to \SS \oplus \SS^{-d}$, so it exhibits $p^*b^*\pi_\st^{-1} \simeq \SS^{-d}_{E_b}$ as the dualising spectrum of $E_b$.
	This implies that the displayed map is an equivalence as required.
\end{proof}

Since $\pi_*$ also satisfies the projection formula, it follows from \cite[Theorem~2.35]{cnossen:twisted-ambidexterity} (applied to the case $\sB = \An_{/B}$, $\sC = \Sp$, $A = \pi$ and $B = \id_B$) that the pair $(\pi^*\pi_\st^{-1},c_\pi)$ is the unique pair $(V,c)$ with $V$ a parametrised spectrum over $E$ and $c\colon \S_B \to \pi_!(V)$ satisfying the analog of \cref{dualising-spectrum} (this equivalence is our definition of the relative dualising spectrum).

Suppose now that $\pi$ is equipped with a section $s$. Then $s$ induces a section $f\colon \S_B \to \pi_!\pi^*(\S_B)$ of the counit. In particular, we may identify the cofibre of $f$ with $\pi_\st$
\[ \begin{tikzcd}
	\pi_\st \ar[r,"\iota"'] & \pi_!\pi^*(\S_B) \ar[r] \ar[l,bend right,"g"'] & \S_B \ar[l,"f"', bend right]
\end{tikzcd}\]
such that $g$ is a retraction of $\iota$.

We now unravel the reduced fibrewise orientation class $\bar{\sigma}^R(\pi)$. It is given by the composite
\[ \S_B \xrightarrow{c_\pi \otimes_\S 1_R} \pi_!\pi^*(\pi_{\st}^{-1}\otimes_\S R) \simeq \Omega^d \pi_!\pi^*(R_B) 
\xrightarrow{g\otimes_\S R} \Omega^d \pi_\st \otimes R \simeq R_B \]
where the two equivalences are induced by the canonical $\MSG$-orientation of $\pi_\st$ (which induces equivalences $\Omega^d\S_B \otimes_\S R \to \pi_\st^{-1}\otimes_\S R$ and $\Sigma^d \S_B \otimes_\S R \to \pi_\st \otimes_\S R$).
Unravelling the definitions, this map is given by the unit of $R$: 
using again projection formulas as in the definition of $c_\pi$, this is equivalent to showing that the composite
\[ \pi_\st \xrightarrow{\iota} \pi_!\pi^*(\S_B) \to \pi_!\pi^*(R_B) \xrightarrow{g\otimes_\S R} \pi_\st\otimes R\]
is the unit of $R$, which follows from the definition of $g$.
\newline

From the functoriality of $\pcS(-)$, we obtain a parametrised spectrum $\pcS(\pi)$ over $B$ sending a point $b \in B$ to $\pcS(E_b)$. The periodic total surgery obstructions $\ptso(E_b)$ of the fibres $E_b$ then assemble into a section $\ptso(\pi)$ of $\Omega^d\pcS(\pi)$.

As indicated above, the family version of \cite[Main theorem]{BFMW} for closed topological manifold fibres asserts  that an oriented Poincar\'e fibration $\pi\colon E \rightarrow B$ with typical fibre a topological manifold $F$, $\pi$ is fibre homotopy equivalent to a block bundle with typical fibre a homology manifold (homotopy equivalent to $F$) if and only if the section $\ptso(\pi)$ just described vanishes. In particular, when $\ptso(\pi)$ vanishes and $B$ is a manifold, $E$ is then homotopy equivalent to a homology manifold. In the following, we will give sufficient conditions which imply that $\ptso(\pi)$ vanishes.

\begin{Lemma}\label[Lemma]{Lemma}
Let $\pi\colon E \to B$ be  an oriented Poincar\'e fibration which has simply connected fibres and is equipped with a section $s$. Then the following are equivalent.
\begin{enumerate}
\item Giving a trivialisation of the fibrewise periodic total surgery obstruction $\ptso(\pi)$, and
\item giving a lift of the fibrewise relative normal orientation $\bar{\sigma}^\vn(\pi)$ to a section of the parametrized spectrum $\Omega^{d}[(E,s(B)) \otimes_B \L^\s(\Z)]$.
\end{enumerate}
\end{Lemma}
\begin{proof}
We consider the following diagram of $B$-parametrised spectra: (we write everywhere the value of these objects over a point $b$, and note that all constructions are functorial in $b$).
\[\begin{tikzcd}
	 & \L^\vq(E_b,s(b)) \ar[d] & \\
	(E_b,s(b)) \otimes \L^\s(\Z) \ar[r] \ar[d] & \L^\vs(E_b,s(b)) \ar[r] \ar[d,"\simeq"] & \pcS(E_b) \\
	(E_b,s(b)) \otimes \L^\n(\Z) \ar[r,"\simeq"] & \L^\vn(E_b,s(b)) 
\end{tikzcd}\]
Here, the symbols in the middle vertical row mean the cofibres of the maps induced by the inclusion $s(b) \to E_b$.
The horizontal and vertical composites are each fibre sequences. This is because $\pcS(E_b)$ is the cofibre of the assembly map in (visible) symmetric L-theory, which is, by construction, an equivalence on the basepoint $s(b)$. We then note that the algebraic $\pi$-$\pi$-theorem implies that the quadratic L-theory term on the very top vanishes as we assume $E_b$ to be simply connected. Moreover, we recall that the assembly map in visible normal L-theory is an equivalence. We therefore find the two indicated equivalences. Passing to spaces of sections we obtain similar fibre sequences and equivalences. Now, by construction, the canonical section of $b \mapsto (E_b,s(b))\otimes \L^\n(\Z)$ corresponds under the two indicated equivalences to the canonical section of $b \mapsto \L^\vs(E_b,s(b))$ taking the fibrewise symmetric signature of the bundle $E \to B$. Therefore, a lift of the left vertical map on spaces of sections is equivalent to the datum of a nullhomotopy of the image of the symmetric signature section in $\Gamma(B,\pcS(E))$, which, by definition, is $\ptso(\pi)$.
\end{proof}

\begin{Thm}\label[Thm]{prop:zack}
Let $\pi\colon E \to B$ be an oriented pointed spherical fibration with typical fibre homotopy equivalent to $S^d$ whose associated element $\pi_\st$ in $[B,\Bsl_1(\SS)]$ is of odd order. Then there exists a trivialization of $\ptso(\pi)$.
\end{Thm}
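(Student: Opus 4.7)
The plan is to apply \cref{Lemma} and thereby reduce the problem to constructing a lift of the fibrewise normal orientation $\sigma^\n(\pi)$ across the map of parametrised spectra $(E,B) \otimes_B \L^\s(\Z) \to (E,B) \otimes_B \L^\n(\Z)$. The obstruction to such a lift is the class $\partial\sigma^\n(\pi) \in \pi_0 \Gamma(B, \Omega^{d-1}[(E,B) \otimes_B \L^\q(\Z)])$ obtained by applying the boundary map of the fibre sequence $\L^\q(\Z) \to \L^\s(\Z) \to \L^\n(\Z)$ to $\sigma^\n(\pi)$, and the strategy is to show this obstruction vanishes via a 2-local analysis.

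The first observation is that $\partial\sigma^\n(\pi)$ is automatically 2-torsion: since $\L^\n(\Z)\adj = 0$ by point (ii), the section spectrum $\Gamma(B, (E,B) \otimes_B \L^\n(\Z))$ is an $\L^\n(\Z)$-module and hence 2-torsion. Consequently $\sigma^\n(\pi)$, and therefore its image $\partial\sigma^\n(\pi)$ under any spectrum map, is 2-torsion, so it suffices to verify the vanishing of $\partial\sigma^\n(\pi)$ after 2-localisation.

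The second observation uses the hypothesis that the classifying element of $\pi$ in $\BSG^0(B)$ has odd order: its image in $\BSG^0(B)_{(2)}$ therefore vanishes, so the parametrised spectrum $(E,B)$ is 2-locally equivalent (as a parametrised spectrum over $B$) to the constant parametrised spectrum $B \otimes \Sigma^d\S$, and the same holds for $(E,B) \otimes_B L$ for any coefficients $L$. In particular, 2-locally $\pi$ corresponds to the trivial pointed sphere bundle $B \times S^d \to B$, which is a topological manifold bundle. The fibrewise Sullivan--Ranicki orientation (point (iv)) then provides a fibrewise symmetric signature lifting the fibrewise normal signature of this trivial bundle. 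Transporting this lift back along the 2-local trivialisation yields the desired 2-local lift of $\sigma^\n(\pi)_{(2)}$; hence $\partial\sigma^\n(\pi)_{(2)} = 0$, and combined with the previous paragraph we conclude $\partial\sigma^\n(\pi) = 0$.

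The main technical delicacy lies in making precise the passage from ``odd order in $\BSG^0(B)$'' to ``2-local triviality of the parametrised spectrum $(E,B) \otimes_B L$'' compatibly with the fibrewise normal orientation; once this is set up, the transport of the fibrewise Sullivan--Ranicki lift across the 2-local trivialisation supplies the desired vanishing.
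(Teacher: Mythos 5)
Your overall strategy aligns with the paper's: reduce via \cref{Lemma} to lifting the fibrewise normal orientation $\sigma^\n(\pi)$ against the map $(E,B)\otimes_B\L^\s(\Z)\to(E,B)\otimes_B\L^\n(\Z)$, trivialise the 2-local sphere bundle using the odd-order hypothesis, and import a lift from the trivial sphere bundle $B\times S^d\to B$. But you streamline the endgame considerably: observing that $\partial\sigma^\n(\pi)$ is killed by $8$ (as it lies in a module over $\L^\n(\Z)$ with $\L^\n_0(\Z)=\Z/8$), you reduce to 2-local vanishing, and since 2-power-torsion injects into the 2-localisation this is indeed enough. This neatly sidesteps the paper's delocalisation step, where compactness of $B$ is used to express the 2-local $\L^\s(\Z)_{(2)}$-section as an integral $\L^\s(\Z)$-section divided by an odd $\alpha$, and then $\alpha^2\equiv 1\pmod 8$ corrects the resulting discrepancy in $\L^\n(\Z)$. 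Your route still uses compactness (to turn odd order in $\BSG^0(B)$ into nullity of $B\to\Bsl_1(\SS_{(2)})$), but avoids the $\alpha$-bookkeeping entirely; it is a genuine, and arguably cleaner, variant of the conclusion.

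However, the step you flag as the ``main technical delicacy'' is not a mere formality but a genuine gap, and it is exactly where the paper's proof does its real work. Transporting the fibrewise Sullivan--Ranicki lift of the trivial bundle along the 2-local trivialisation does \emph{not} produce a lift of $\sigma^\n(\pi)_{(2)}$ on the nose. The null-homotopy of $B\to\Bsl_1(\SS_{(2)})$ gives a trivialisation of the parametrised spectrum, and the transported section is the normal orientation induced by this trivialisation; it differs from $\sigma^\n(\pi)_{(2)}$ (the normal orientation coming from the canonical $\MSG$-Thom class) by a unit, concretely a map $B\to\Z_{(2)}^\times$ recording the difference of two null-homotopies of $B\to\B\Z_{(2)}^\times$. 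The paper identifies this unit via the fibre sequence $\Bsl_1(\SS_{(2)})\to\Bgl_1(\SS_{(2)})\to\B\Z_{(2)}^\times$ and explicitly multiplies the tautological $\L^\s(\Z)_{(2)}$-lift by it (which is legitimate since units of $\Z_{(2)}$ act on $\L^\s(\Z)_{(2)}$) so that the image in $\L^\n$-sections is exactly $\sigma^\n(\pi)_{(2)}$. As written, your claim that ``transporting this lift back $\dots$ yields the desired 2-local lift of $\sigma^\n(\pi)_{(2)}$'' is not established. Inserting this unit correction closes the argument, after which your 2-torsion observation does give an efficient alternative to the paper's final paragraph.
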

\begin{proof}
Working with each connected component of $B$ separately, we may assume that $B$ is connected.
We will show that the data in (ii) of \cref{Lemma} can be given. We recall that $\bar{\sigma}^\vn(\pi)$ is a section of the parametrised $\L^\n(\Z)$-module spectrum $b \mapsto \Omega^d[(E_b,s(b))\otimes \L^\n(\Z)]$. In \cref{Construction:fibrewise-orientation}, we have argued that this parametrized $\L^\n(\Z)$-module spectrum is classified by a shift of $\pi_\st$:
\[ B \xrightarrow{\pi} \mathrm{BAut}_*^+(S^d) \to \{d\}\times \Bsl_1(\S) \xrightarrow{-d} \{0\}\times \Bsl_1(\S) \to \Bgl_1(\L^\n(\Z))\]
where the final map is induced by the unit of the ring spectrum $\L^\n(\Z)$. We also noted that this map is null-homotopic by the tautological $\MSG$-orientation of $\pi_\st$. Since it will become important now, we recall what this tautological orientation is. First, we note that $\L^\n(\Z)$ is 2-local so we may work 2-locally whenever suitable.
By orientation theory, the composite
\[ \Bsl_1(\SS_{(2)}) \to \Bgl_1(\SS_{(2)}) \to \Bgl_1(\MSG_{(2)}) \] 
is canonically trivialised as a map of $\mathbb{E}_\infty$-groups since $\MSG_{(2)}$ is the Thom spectrum of the $\bE_\infty$-map $\Bsl_1(\S_{(2)}) \to \Bgl_1(\S_{(2)})$ so that the claim follows from \cite[Lemma 3.15]{ACB}, using \cite[Def.\ 3.14]{ACB} and the pullback square appearing in the beginning of the proof of \cite[Prop.\ 3.16]{ACB} (for $R=\S_{(2)}$ and $A=\MSG_{(2)}$).
It follows that the null-homotopy of the above composite is induced by the canonical null-homotopy of the fibre sequence
\[ \Bsl_1(\SS_{(2)}) \to \Bgl_1(\SS_{(2)}) \to \B\Z_{(2)}^\times \]
and a map $\B\Z_{(2)}^\times \to \Bgl_1(\MSG_{(2)})$. Upon applying the loop functor, this latter map sends a 2-local unit $q$ to the self map of $\MSG_{(2)}$ given by multiplication with $q$. Consequently, the composite
\[ B \to \Bsl_1(\S) \to \Bgl_1(\MSG_{(2)}) \to \Bgl_1(\L^\n(\Z)), \]
which classifies the parametrised spectrum $\Omega^d((E,s(B)) \otimes_B \L^\n(\Z))$ as explained above, is canonically null-homotopic as claimed. Here, the second map is induced by the unit of the ring spectrum $\MSG_{(2)}$ and the final map by the normal Sullivan--Ranicki orientation $\MSG \to \L^\n(\Z)$. As explained in \cref{Construction:fibrewise-orientation}, using this null-homotopy, the reduced fibrewise normal fundamental class $\bar{\sigma}(\pi)$ becomes the tautological section which is constant at $1 \in \pi_0\L^\n(\Z)$.

Next, we note that the assumption that $\pi$ thought of as an element of $[B,\Bsl_1(\SS)]$ is of odd order is equivalent to the statement that the composite
\[ B \xrightarrow{\pi} \Bsl_1(\SS) \to \Bsl_1(\SS_{(2)}) \]
is null-homotopic. Choosing such a null-homotopy, we obtain two null-homotopies of the composite 
\[ B \to \Bsl_1(\SS) \to \B\Z_{(2)}^\times \] 
which together determine a map $q\colon B \to \Z_{(2)}^\times$. The composite
\[ B \to \Bsl_1(\SS) \to \Bgl_1(\MSG_{(2)}) \to \Bgl_1(\L^\n(\Z)) \]
is therefore also null-homotopic in two ways, whose difference is determined by the composite
\[ B \xrightarrow{q} \Z_{(2)}^\times \to \gl_1(\MSG_{(2)}) \to \gl_1(\L^\n(\Z))\]
where we may think of $q$ as an element of $\Z_{(2)}^\times$ as we have reduced to the case where $B$ is connected.
This implies that the composite equivalence of $B$-parametrised $\L^\n(\Z)$-module spectra
\[\begin{tikzcd} r^*\L^\n(\Z) \ar[r,"\simeq"',"(1)"] & \Omega^d[(E,s(B))\otimes_B \L^\n(\Z)] & r^*\L^\n(\Z)\ar[l,"\simeq","(2)"'] \end{tikzcd} \]
is simply given by multiplication by $q$. Here, equivalence (1) comes from orientation theory as discussed in the beginning of the proof while equivalence (2) comes from the assumption that $\pi_\st$ has odd order as an element of $[B,\Bsl_1(\S)]$. It follows that under equivalence (2), the normal orientation $\bar{\sigma}^\vn(\pi)$ corresponds to the map $B \to \Omega^\infty\L^\n(\Z)$ which is constant at $q \in \pi_0 \L^\n(\Z)$. Now, we consider the following diagram
\[\begin{tikzcd}
	\Omega^d[(E,s(B))\otimes_B \L^\s(\Z)_{(2)}] \ar[d] & r^*\L^\s(\Z)_{(2)} \ar[d] \ar[l,"\simeq"] \\
	\Omega^d[(E,s(B))\otimes_B \L^\n(\Z)] & r^*\L^\n(\Z) \ar[l,"\simeq","(2)"']
\end{tikzcd}\]
where the upper horizontal equivalence is again induced by the trivialisation of the composite $B \to \Bsl_1(\S) \to \Bsl_1(\S_{(2)})$, and is hence compatible with equivalence (2) discussed above. Under the top horizontal equivalence, the map $q \colon B \to \Omega^\infty\L^\s(\Z)_{(2)}$ which is constant at $q \in \pi_0\L^\s(\Z)$ therefore lifts the normal orientation $\bar{\sigma}^\vn(\pi)$ to a section $\bar{s}$ of the $B$-parametrised spectrum $\Omega^d[(E,s(B))\otimes_B \L^\s(\Z)_{(2)}]$. 
Now, since $B$ is compact and $\L^\s(\Z)_{(2)}$ is the colimit over multiplication by odd integers on $\L^\s(\Z)$, there exists an odd integer $\alpha$ and a section $s$ of the $B$-parametrised spectrum $\Omega^d[(E,s(B))\otimes_B \L^\s(\Z)]$ such that $\bar{s}$ is the image of $s$ under the map
\[ \Gamma(\Omega^d[(E,s(B))\otimes_B \L^\s(\Z)]) \xrightarrow{\frac{\mathrm{can}}{\alpha}} \Gamma(\Omega^d[(E,s(B))\otimes_B \L^\s(\Z)_{(2)}])\]
induced on spaces of sections by the canonical map to the 2-localisation divided by $\alpha$. Consider finally the section $\alpha \cdot s$ of $\Omega^d[(E,s(B))\otimes_B \L^\s(\Z)]$. We claim that it lifts the normal orientation $\bar{\sigma}^\vn(\pi)$, finishing the proof of the theorem. Indeed, its image in $\Gamma(\Omega^d[(E,s(B))\otimes_B \L^\n(\Z))$ is, by construction, given by $\alpha^2$ times the image of $\bar{s}$ in $\Gamma(\Omega^d[(E,s(B))\otimes_B \L^\n(\Z)])$, hence under the equivalence induced by (2) by the map $\alpha^2 \cdot q \colon B \to \Omega^\infty \L^\n(\Z)$. But now we may use that $\pi_0\L^\n(\Z) \cong \Z/8\Z$ and that $\alpha^2 \equiv 1$ modulo $8$ to obtain the desired result.
\end{proof}

\begin{Cor}\label{Cor:fibrewise-tso-0}
For the spherical fibrations $\pi\colon E \to M$ constructed from $\bar{x} \in \pi_\ell(\Bgl_1(\SS))$ before the \hyperlink{ThmB}{Main Theorem}, we have $\ptso(\pi) = 0$ provided the order $n$ of $\bar{x}$ is odd.
\end{Cor}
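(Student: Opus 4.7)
The plan is to apply \cref{prop:zack} to $\pi$: it suffices to verify that the stable classifying map of $\pi$ in $\BSG^0(M)$ has odd order.

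First, I identify this class. By the construction preceding the proof of the Main Theorem, the unstable classifying map of $\pi$ is a chosen lift of $-xf\colon M \to \Bgl_1(\SS)$ along $\mathrm{Baut}_*(S^k) \to \Bgl_1(\SS)$, so stably it is given by $-[xf]$ in $[M,\Bgl_1(\SS)]$. Because $xf$ is pointed and factors through the connected space $M(\Z/n\Z,\ell)$, this class lies in the identity component $\BSG \subseteq \Bgl_1(\SS)$, and so defines an element of $\BSG^0(M)$.

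Next, I would bound the order of $[xf]$. Using that $\Bgl_1(\SS)$ is an infinite loop space, $[M,\Bgl_1(\SS)]$ is an abelian group in which multiplication by $n$ is realised by precomposition with the stable degree-$n$ self-map of $\Sigma^\infty M$. Since $xf$ factors stably through $\Sigma^\infty M(\Z/n\Z,\ell) \simeq \Sigma^\ell \SS/n$, it is enough to show that $n\cdot \mathrm{id}_{\SS/n}$ is nullhomotopic. This is a classical fact for odd $n$: then $\SS/n$ splits as a smash product of mod-$p^{v_p(n)}$ Moore spectra over the odd primes $p$ dividing $n$, each of which carries the structure of a unital, homotopy associative ring spectrum; in any such ring spectrum the identity $n\cdot\mathrm{id} = (n\cdot 1)\cdot\mathrm{id}$ vanishes, as $n = 0$ in $\pi_0(\SS/n) = \Z/n\Z$.

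Combining the two steps, $n\cdot[xf] = 0$ in $\BSG^0(M)$, so the stable class of $\pi$ has order dividing the odd integer $n$, and \cref{prop:zack} yields $\ptso(\pi) = 0$. The only piece of real content is the second step; the hardest point there is the bookkeeping that translates the abelian group structure on $[M,\Bgl_1(\SS)]$ (and multiplication by $n$ on it) into composition with a self-map of the source, after which the odd-$n$ ring-spectrum argument is standard.
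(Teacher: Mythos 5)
Your proof follows the same strategy as the paper's: invoke \cref{prop:zack} and then argue that the stable classifying map has odd order because it factors through the Moore space $M(\Z/n\Z,\ell)$. The paper's version of the second step is slightly more abstract and avoids the Moore-spectrum manipulations: it simply observes that, since $n$ is odd, $\BSG^0(M(\Z/n\Z,\ell))$ is a finite group of odd order (by the cofibre sequence $S^\ell \xrightarrow{n} S^\ell \to M(\Z/n\Z,\ell)$ and the finiteness of $\pi_*\BSG$), and that the classifying map of $\pi$ lies in its image. Your route — showing directly that $n \cdot [xf] = 0$ by reducing to $n \cdot \mathrm{id}_{\SS/n} = 0$ — is fine, but contains a small error in the last step: for coprime $m,k$, the Moore spectrum $\SS/mk$ splits as a \emph{wedge} $\SS/m \vee \SS/k$, not a smash product (indeed $\SS/2 \wedge \SS/3 \simeq 0$). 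With wedge replacing smash, the reduction to prime-power $n$ works, and the unital multiplication on $\SS/p^a$ for $p$ odd then gives $n\cdot\mathrm{id}_{\SS/n} = 0$ as you say; alternatively, for $n$ odd one sees directly from the cofibre sequence that $\pi_1(\SS/n)=0$, whence $[\SS/n,\SS/n]\cong\Z/n$ and multiplication by $n$ is already visibly null. So the conclusion stands, but the justification of the odd-$n$ nullhomotopy needs that correction.
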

\begin{proof}
By \cref{prop:zack}, it suffices to show that the classifying map of $\pi$, viewed as an element in $[M,\Bsl_1(\SS)]$, is of odd order. But by construction, this classifying map is in the image of the map 
$[M(\Z/n\Z,\ell),\Bsl_1(\SS)] \to [M,\Bsl_1(\SS)]$ and the source of this map is a finite group of odd order.
\end{proof}

\bibliographystyle{amsalpha}
\bibliography{mybib}

\end{document}